\def\R{\mathbb{R}}
\def\Rp{\R^p}
\def\Rt{\R^t}
\def\N{\mathbb{N}}
\def\eqref#1{{\normalfont(\ref{#1})}}
\def\FR{\mbox{\boldmath$FR$}\,\,}
\def\FRp{\mbox{\boldmath$FR$}}
\def\NNMp{\mbox{\boldmath$NNM$}}
\def\LRMC{\mbox{\boldmath$LRMC$}\,\,}
\def\LRMCp{\mbox{\boldmath$LRMC$}}
\def\SDP{\mbox{\boldmath$SDP$}\,\,}
\def\SDPp{\mbox{\boldmath$SDP$}}
\def\eqref#1{{\normalfont(\ref{#1})}}
\newtheorem{defi}{Definition}[section]
\newtheorem{prop}{Proposition}[section]
\newtheorem{cor}{Corollary}[section]
\newtheorem{lemma}{Lemma}[section]
\newcommand{\textdef}[1]{\textit{#1}\index{#1}}
\newcommand{\Ss}{{\mathcal S} }
\newcommand{\PP}{{\mathcal P} }
\newcommand{\norm}[1]{\left\| #1 \right\|}
\newcommand{\Sn}{{\mathcal S^n}}
\newcommand{\Smn}{{\mathcal S^{m+n}}}
\newcommand{\MM}{{\mathcal M}}
\newcommand{\A}{{\mathcal A}}
\newcommand{\Zcal}{{\mathcal Z}}
\newcommand{\bbm}{\begin{bmatrix}}
\newcommand{\ebm}{\end{bmatrix}}
\newcommand{\bem}{\begin{pmatrix}}
\newcommand{\eem}{\end{pmatrix}}
\newcommand{\beq}{\begin{linenomath*} \begin{equation}}
\newcommand{\beqs}{\begin{linenomath*} \begin{equation*}}
\newcommand{\eeq}{\end{equation} \end{linenomath*}}
\newcommand{\eeqs}{\end{equation*} \end{linenomath*}}
\newcommand{\beqr}{\begin{linenomath*} \begin{eqnarray}}
\newcommand{\beqrs}{\begin{linenomath*} \begin{eqnarray*}}
\newcommand{\eeqr}{\end{eqnarray} \end{linenomath*}}
\newcommand{\eeqrs}{\end{eqnarray*} \end{linenomath*}}
\newcommand{\bet}{\begin{table}}
\DeclareMathOperator{\trace}{{trace}}
\DeclareMathOperator{\Trace}{{trace}}
\DeclareMathOperator{\size}{{size}}
\DeclareMathOperator{\eig}{{eig}}
\DeclareMathOperator{\rank}{{rank}}
\newcommand{\nc}{\newcommand}
\nc{\arrow}{{\rm arrow\,}}
\nc{\Arrow}{{\rm Arrow\,}}
\nc{\BoDiag}{{\rm B^0Diag\,}}
\nc{\bodiag}{{\rm b^0diag\,}}
\nc{\Mm}{{\mathcal M}^{m} }
\nc{\Mmn}{{\mathcal M}^{mn} }
\nc{\Mpq}{{\mathcal M}^{pq} }
\nc{\Mnr}{{\mathcal M}_{nr} }
\nc{\Mnmr}{{\mathcal M}_{(n-1)r} }
\nc{\kwqqp}{Q{$^2$}P\,}
\nc{\kwqqps}{Q{$^2$}Ps}
\nc{\notinaho}{(X,S)\in \overline{AHO}(\A)}
\nc{\inaho}{(X,S)\in AHO(\A)}
\newcommand{\bea}{\begin{eqnarray}}%
\newcommand{\eea}{\end{eqnarray}}%
\newcommand{\beas}{\begin{eqnarray*}}%
\newcommand{\eeas}{\end{eqnarray*}}%
\newcommand{\Rmn}{\R^{m \times n}}%
\newcommand{\Range}{\mathcal{R}}%
\newcommand{\Hnp}[1][]{\,\mathbb{H}_+^{\ifthenelse{\equal{#1}{}}{n}{#1}}}
\newcommand{\Hn}[1][]{\,\mathbb{H}^{\ifthenelse{\equal{#1}{}}{n}{#1}}}
\newcommand{\Dn}[1][]{\,\mathbb{D}^{\ifthenelse{\equal{#1}{}}{n}{#1}}}
\begin{document}
%\begin{multicols}{2}
%\setlength{\multicolsep}{30pt}

\bibliographystyle{plain}

\title{
	Low-Rank  Matrix Completion using Nuclear Norm\\ 
	with Facial Reduction\footnote{Presented as Part of tutorial at
		DIMACS Workshop on Distance Geometry: Theory and
	Applications, July 26-29, 2016, \cite{Wolktalkdimacs:16}}
%\footnote{Department of Combinatorics and Optimization,
%          Waterloo, Ontario N2L 3G1, Canada,
%          Research Report CORR 2010-01.}
%\footnote{Research supported by Natural Sciences Engineering Research
%		   Council Canada and a grant from AFOSR.}
}

\author{
	\href{http://shimenghuang.com/} {Shimeng Huang}
	\thanks{
	Department of Combinatorics and Optimization, Faculty of Mathematics, University of Waterloo, Waterloo, Ontario, Canada N2L 3G1}
\and
	\href{http://www.math.uwaterloo.ca/~hwolkowi/} {Henry Wolkowicz}%
    \thanks{Department of Combinatorics and Optimization, Faculty of Mathematics, University of Waterloo, Waterloo, Ontario, Canada N2L 3G1; Research supported by The Natural Sciences and Engineering Research Council of Canada and by AFOSR;}
%\and
%	\href{https://www.linkedin.com/in/xinghang-ye-03526497?} {Xinghang Ye}
%	\thanks{Department of Computational Mathematics, Faculty of Mathematics, University of Waterloo, Waterloo, Ontario, Canada N2L 3G1}
}

\date{\today, \currenttime}
          \maketitle
%\begin{center}
%          University of Waterloo\\
%          Department of Combinatorics and Optimization\\
%          Waterloo, Ontario N2L 3G1, Canada\\
%          Research Report CORR 2009-04
%\end{center}

%{\bf Key Words:}  

%\noindent {\bf AMS Subject Classification:}

\begin{abstract}
Minimization of the nuclear norm is often used as a surrogate, convex
relaxation, for finding the minimum rank completion (recovery)
of a partial matrix.
%This parallels the compressed sensing framework.
The minimum nuclear norm problem can be solved as a trace minimization
semidefinite programming problem (\SDP). The \SDP and its dual are regular
in the sense that they both satisfy strict feasibility. Interior point
algorithms are the current methods of choice for these problems. This
means that it is difficult to solve large scale problems and difficult
to get high accuracy solutions.

In this paper we take advantage of the structure at optimality for the
minimum nuclear norm problem. We show that even though strict
feasibility holds, the facial reduction framework can be successfully applied to
obtain a proper face that contains the optimal set, and thus can
dramatically reduce the size of the final nuclear norm problem while
guaranteeing a low-rank solution. We include numerical tests for 
both exact and noisy cases. In all cases we assume that knowledge
of a \emph{target rank} is available.
\end{abstract}

%\linenumbers
{\bf Keywords:}
Low-rank matrix completion, matrix recovery, semidefinite programming (\SDPp), 
facial reduction, cliques, Slater condition, nuclear norm, compressed sensing.

{\bf AMS subject classifications:} 
65J22, 90C22, 65K10, 52A41, 90C46

\tableofcontents
\listoftables
\listofalgorithms
%\listoffigures

\section{Introduction}
We consider the intractable
\emph{low-rank matrix completion problem} (\LRMCp), i.e.,~the
\index{low-rank matrix completion problem, \LRMCp}
\index{\LRMCp, low-rank matrix completion problem}
problem of finding the missing elements of a given (partial) matrix so
that the completion has low-rank. This problem can be relaxed using the
nuclear norm that can be then solved using a 
\emph{semidefinite programming (\SDPp)}
\index{semidefinite programming (\SDPp)}
\index{\SDPp, semidefinite programming}
model. Though the resulting \SDP and its dual satisfy strict feasibilty,
we show that it is implicitly highly degenerate and amenable to 
\emph{facial reduction} (\FRp). 
\index{facial reduction (\FRp)}
\index{\FRp, facial reduction}
This is done by taking advantage of the special structure 
\emph{at the optimum} and by using the
\textdef{exposing vectors} approach,~see~\cite{ChDrWo:14} 
The exposing
vector approach is particularly amenable to the noisy case. Moreover,
the result from facial reduction is a significant reduction in the size
of the variables and a decrease in the rank of the solution.  
If the data is exact, then \FR results in redundant constraints that we
remove before solving for the low-rank solution.
While if the data is contaminated with noise, \FR yields an
overdetermined semidefinite least squares problem.
We \emph{flip} this problem to minimize the nuclear norm 
using a Pareto frontier approach. 
Instead of removing constraints from the overdetermined problem, 
we exploit the notion of \textdef{sketch matrix} to reduce the size of the 
overdetermined problem. The sketch matrix approach is studied in 
e.g.,~\cite{PolanciWainwright:15}. 

The problem of low-rank matrix completion has many applications to
model reduction, sensor network localization, pattern recognition and
machine learning. 
This problem is further related to real applications in data science, 
for instance, the collaborative filtering (the well known Netflix problem) 
and multi-tasking learning.  See e.g.,~the recent work 
in \cite{MR3440180,MR3436116} and the references therein.
Of particular interest is the case where the data is contaminated with noise. 
This falls into the area of \emph{compressed sensing} or
\emph{compressive sampling}.
An extensive collection of papers, books, codes is available at 
the: Compressive Sensing Resources,
\href{http://dsp.rice.edu/cs}{http://dsp.rice.edu/cs}.

The convex relaxation of minimizing the rank using
the nuclear norm, the sum of the
singular values, is studied in e.g.,~\cite{Rechtparrilofazel,Fazel:02}. 
The solutions can be found directly by subgradient methods or by
using semidefinite programming \SDP with interior point methods or
low-rank methods, again see~\cite{Rechtparrilofazel}. Many
other methods have been developed, e.g.,~\cite{MekaJainDhillon:09}.
The two main approaches for rank minimization,
convex relaxations and spectral methods, are discussed in
\cite{MR3417786,MR2723472} along with a new algebraic combinatorial approach.
A related analysis from a different viewpoint using
rigidity in graphs is provided in \cite{MR2595541}.
%%% \cite{MR2723472}. candes/tao power of convex relaxation 

%\subsection{Outline}
We continue in Section \ref{sect:backgr} with the basic notions for
\LRMC using the nuclear norm and with the graph
framework that we employ. We continue in Section \ref{sect:nuclnorm} with
the details on how to exploit \emph{facial reduction} \FRp, for the
\SDP model to minimize the nuclear norm problem.
\index{\FRp, facial reduction}
Section \ref{sect:dimred} presents the details for finding the low-rank
solution after the \FR has been completed.
We present the numerical results in Section \ref{sect:numers} and a
comparison with results in \cite{MR3440180}.
Concluding remarks are included in Section \ref{sect:concl}.

\section{Background on \LRMCp, Nuclear Norm Minimization, \SDP}
\label{sect:backgr}
We now consider our problem within
the known framework on relaxing the low-rank matrix completion
problem using the nuclear norm minimization and then using 
\SDP to solve the relaxation. For the known results
we follow and include much of the known development in the literature 
e.g.,~\cite[Prop. 2.1]{Rechtparrilofazel}.
In this section we also include several useful tools and
a graph theoretic framework that allows us to exploit \FR at the
optimum.

\subsection{Models}
Suppose that we are given a \textdef{partial $m\times n$ real matrix $Z\in
\Rmn$} which has precise data. The \emph{low-rank matrix completion problem} 
\LRMCp,
\index{low-rank matrix completion problem, \LRMCp}
\index{\LRMCp, low-rank matrix completion problem}
can be modeled as follows:
\begin{equation}
\label{basicsetting}
(\LRMCp) \quad \begin{array}{cl}
	\min & \rank(M)\\
	\text{s.t.}& \PP_{\hat{E}}(M) = b,
\end{array}
\end{equation} 
where \textdef{$\hat E$} is the set of 
indices containing the known entries of $Z$, 
$\PP_{\hat{E}}(\cdot): \Rmn \rightarrow \R^{\hat E}$ is the projection onto
the corresponding entries in $\hat{E}$, and
$b = \PP_{\hat{E}}(Z)$ is the vector 
of known entries formed from $Z$.
However the rank function is not a convex function and the
\LRMC is computationally intractable.

To set up the problem as a convex optimization problem, we can relax 
the rank minimization using \textdef{nuclear norm minimization}:
\begin{equation}
\label{basicnuclear}
(\NNMp) \qquad
\begin{array}{cl}
	\min & \|M \|_* \\
	\text{s.t.}& \PP_{\hat{E}}(M) = b,
\end{array}
\end{equation}
where the nuclear norm $\|\cdot\|_*$ is the sum of the singular
values, i.e., $\|M \|_* = \sum_i \sigma_i(M)$.

Moreover,  we consider the primal-dual pair of problems for
the \textdef{nuclear norm} minimization
problem:
\begin{equation}
	\label{eq:pdpairnuclnorm}
	\qquad
	\begin{array}{lr}
	\begin{array}{cl}
		\min_M & \|M\|_* \\
		\text{s.t.} & \A(M) = b
	\end{array}
	\qquad
	\qquad
	\qquad
	\begin{array}{cl}
		\max_z & \langle b,z\rangle  \\
		\text{s.t.} & \|\A^* (z) \| \leq 1,
	\end{array}
	\end{array}
\end{equation}
where  $\A : \Rmn \to \Rt$ is a linear mapping, 
$\A^*$ is the \textdef{adjoint of $\A$}, and
$\|\cdot\|$ is the operator norm of a matrix, i.e.,~the largest
singular value. The matrix norms  $\|\cdot\|_*$ and $\|\cdot\|$
are a dual pair of matrix norms akin to the vector $\ell_1,\ell_\infty$
norms on the vector of singular values. Without loss of generality, we further
assume that $\A$ is \emph{surjective}.\footnote{Note that $\A$
corresponding to sampling is surjective as we can consider
$\A(M)_{ij \in \hat E} = \trace (E_{ij} M)$, where $E_{ij}$
is the $ij$-unit matrix.} In general, the linear equality constraint is
an underdetermined linear system. In our case, we restrict to the case
that $\A=\PP_{\hat E}$.

\begin{prop}
Suppose that there exists $\hat M$ with $\A(\hat M) = b$.
The pair of programs in \eqref{eq:pdpairnuclnorm} are a convex
primal-dual pair and they
satisfy both primal and dual strong duality, i.e.,~the 
optimal values are equal and both values are attained. 
\end{prop}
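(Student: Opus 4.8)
The plan is to verify strong duality by combining an elementary compactness argument for attainment with the standard convex-duality theorem for affinely constrained problems. First I would record weak duality directly: for any primal-feasible $M$ (so $\A(M)=b$) and any dual-feasible $z$ (so $\|\A^*(z)\|\le 1$), the generalized H\"older inequality for the dual norm pair gives $\langle b,z\rangle = \langle \A(M),z\rangle = \langle M,\A^*(z)\rangle \le \|M\|_*\,\|\A^*(z)\| \le \|M\|_*$. Taking the infimum over feasible $M$ and then the supremum over feasible $z$ yields $\vD \le \vP$, where $\vP,\vD$ denote the primal and dual optimal values.

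Next I would establish attainment on both sides by Weierstrass. The primal feasible set $\{M:\A(M)=b\}$ is nonempty (by the hypothesis on $\hat M$), closed, and affine, while the objective $\|\cdot\|_*$ is continuous and coercive, so its sublevel sets are compact; intersecting a fixed sublevel set with the closed feasible set gives a nonempty compact set on which the continuous objective attains its minimum. For the dual, surjectivity of $\A$ makes $\A^*$ injective, so $z\mapsto \|\A^*(z)\|$ is a genuine norm on $\Rt$ (it vanishes only at $z=0$ because $\ker \A^*=\{0\}$); hence the dual feasible set $\{z:\|\A^*(z)\|\le 1\}$ is the compact unit ball of this norm, and the linear dual objective attains its maximum on it.

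It then remains to rule out a duality gap, i.e.\ to upgrade $\vD\le\vP$ to equality. I would obtain the stated dual as the Fenchel--Lagrangian dual of the primal: dualizing the equality constraint with multiplier $z$ gives the dual function $g(z)=\langle b,z\rangle-\sup_M\big(\langle \A^*(z),M\rangle-\|M\|_*\big)$, and since the conjugate of a norm is the indicator of the dual-norm unit ball, the inner supremum equals $0$ when $\|\A^*(z)\|\le 1$ and $+\infty$ otherwise; this reproduces exactly the dual in \eqref{eq:pdpairnuclnorm}. The key observation is that the nuclear norm is finite everywhere, so $\dom\|\cdot\|_* = \Rmn$, whence $b\in \A(\dom\|\cdot\|_*)=\range \A=\Rt$ lies in the relative interior of the image of the effective domain. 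Under this constraint qualification the convex-duality theorem for affinely constrained problems (Fenchel--Rockafellar) guarantees zero duality gap, so $\vP=\vD$, completing the argument.

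The main obstacle, and the only nontrivial step, is the no-gap claim. Attainment is routine compactness, but strong duality must not be argued via the naive Slater condition: the primal carries only affine equality constraints and has no strictly feasible interior point in the inequality sense. The correct mechanism is the full-domain property of the nuclear norm together with surjectivity of $\A$, which places $b$ in the relative interior of $\A(\dom\|\cdot\|_*)$ and thereby activates the affine-constraint form of convex duality; I would cite that theorem in its relative-interior version rather than appeal to an inequality-Slater argument. As an alternative path one could instead lift the problem to the equivalent trace-minimization \SDP, whose primal and dual are strictly feasible, and transfer strong duality back, but the direct Fenchel route avoids this lifting.
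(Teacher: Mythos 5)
Your proof is correct, but it takes a genuinely different and more self-contained route than the paper's. The paper disposes of the proposition in two lines: it cites \cite[Prop.~2.1]{Rechtparrilofazel} for the primal--dual relationship, and then observes that the \emph{generalized} Slater condition holds for \emph{both} programs in \eqref{eq:pdpairnuclnorm} --- for the primal because its constraints are affine (so mere feasibility of $\hat M$ suffices), and for the dual because $z=0$ is strictly feasible, $\|\A^*(0)\|=0<1$. Each regularity condition then yields zero gap plus attainment in the \emph{opposite} problem, which together give the full statement. Your zero-gap step, Fenchel--Rockafellar with the relative-interior condition ($\dom\|\cdot\|_*=\Rmn$ and $b\in\range\A$), is in substance the same mechanism as the paper's ``generalized Slater for the primal''; indeed, your closing caution against the ``naive'' Slater condition is precisely what the word ``generalized'' is doing in the paper's proof, and note that Slater \emph{can} legitimately be applied to the dual, whose inequality constraint is strictly satisfied at $z=0$ --- that is the paper's other half. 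Where you genuinely diverge is attainment: instead of harvesting it from constraint qualifications, you prove primal attainment directly by coercivity of the nuclear norm (Weierstrass on a sublevel set intersected with the affine feasible set) and dual attainment by compactness of the dual feasible set, using surjectivity of $\A$ to make $z\mapsto\|\A^*(z)\|$ a genuine norm on $\Rt$. Both routes are valid under the paper's standing assumption that $\A$ is surjective. The paper's argument is shorter and symmetric, and gets attainment ``for free'' from regularity; yours is self-contained --- it derives the dual via conjugacy of the nuclear/operator norm pair rather than citing it, makes weak duality explicit through the H\"older-type inequality, and makes both attainment claims transparent through elementary compactness.
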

\begin{proof}
This is shown in \cite[Prop. 2.1]{Rechtparrilofazel}. That
primal and dual strong duality holds can be seen from the fact that the
generalized Slater condition trivially holds for both programs using
$M=\hat M, z=0$.
\end{proof}

\begin{cor}
The optimal sets for the primal-dual pair in \eqref{eq:pdpairnuclnorm}
are nonempty, convex, compact sets.
\end{cor}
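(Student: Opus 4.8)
The plan is to verify the three asserted properties in turn. Nonemptiness and convexity will follow almost immediately from the preceding Proposition and from standard convex-analytic facts, so the real content lies in establishing compactness---specifically, boundedness---of each optimal set.

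First I would dispose of nonemptiness: the preceding Proposition already guarantees that both the primal and dual optimal values are attained, so both optimal sets are nonempty. For convexity I would invoke the standard fact that the optimal set of a convex program is convex. Both programs in \eqref{eq:pdpairnuclnorm} are convex: the primal minimizes the norm $\|\cdot\|_*$ (a convex function) over the affine feasible set $\{M : \A(M)=b\}$, while the dual maximizes the linear functional $z \mapsto \langle b,z\rangle$ over $\{z : \|\A^*(z)\| \le 1\}$, which is convex because the operator norm is a convex function. Writing $v_P$ and $v_D$ for the respective optimal values, if two feasible points both attain the optimum then, by convexity of the objective together with convexity of the feasible set, every convex combination is again feasible and attains the optimum; hence each optimal set is convex.

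The substantive work is compactness, which I would split into closedness and boundedness. Closedness is routine: each optimal set is the intersection of the (closed) feasible set with a level set of a continuous function, namely $\{M : \|M\|_* \le v_P\}$ on the primal side and $\{z : \langle b,z\rangle \ge v_D\}$ on the dual side, and an intersection of closed sets is closed. Boundedness is where the structure matters. On the primal side, the optimal set is contained in the nuclear-norm ball $\{M : \|M\|_* \le v_P\}$ of finite radius $v_P$; since all norms on the finite-dimensional space $\Rmn$ are equivalent, this ball is bounded, so the primal optimal set is bounded and therefore compact.

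The dual side is the step I expect to be the crux, and it is exactly where the standing assumption that $\A$ is surjective is used. Surjectivity of $\A$ is equivalent to injectivity of the adjoint $\A^*$, and on a finite-dimensional space an injective linear map is bounded below: by compactness of the unit sphere there is a constant $c>0$ with $\|\A^*(z)\| \ge c\,\|z\|$ for all $z$. Consequently the dual constraint $\|\A^*(z)\| \le 1$ already forces $\|z\| \le 1/c$, so the entire dual feasible set---and a fortiori the dual optimal set---is bounded, hence compact. The only genuine obstacle is recognizing that \emph{without} surjectivity the dual feasible region could contain an unbounded affine subspace (a nontrivial kernel of $\A^*$) along which the objective is constant, so that boundedness would fail; the surjectivity hypothesis is precisely what rules this out.
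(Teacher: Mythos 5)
Your proof is correct, but it proceeds quite differently from the paper's. The paper argues entirely through duality and constraint qualifications: surjectivity of $\A$ gives the Mangasarian--Fromovitz constraint qualification for the primal, $z=0$ gives strict feasibility for the dual, and it then invokes the classical theorem (citing Fiacco) that a constraint qualification for one problem of a convex primal--dual pair is \emph{equivalent} to the other problem having a nonempty, convex, compact optimal set. You instead verify the three properties directly: nonemptiness from attainment in the preceding Proposition, convexity from standard level-set arguments, and compactness by exhibiting explicit bounds --- the primal optimal set sits inside the nuclear-norm ball of radius $v_P$ (coercivity of the norm objective), while on the dual side surjectivity of $\A$ makes $\A^*$ injective, hence bounded below by some $c>0$ on the finite-dimensional domain, so the \emph{entire} dual feasible set lies in a ball of radius $1/c$. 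Your route is more elementary and self-contained (no appeal to sensitivity-analysis theory is needed), and it even yields a slightly stronger conclusion for the dual, namely compactness of the whole feasible region rather than just the optimal set; the paper's route is shorter, treats both problems symmetrically through the same regularity principle, and highlights the CQ/regularity structure that the rest of the paper (Slater, facial reduction) is built around. Both proofs use the surjectivity assumption in an essential way --- you to bound the dual feasible set, the paper to assert MFCQ for the primal --- and your closing observation that without surjectivity the dual feasible set would contain the unbounded affine subspace $\ker\A^*$ is exactly the right diagnosis of why the hypothesis cannot be dropped.
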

\begin{proof}
This follows since both problems are regular, i.e.,~since $\A$ is
surjective the primal satisfies the \textdef{Mangasarian-Fromovitz
constraint qualification}; while $z=0$ shows that the dual satisfies
strict feasibility. It is well known that this constraint qualification
is equivalent to the dual problem having a nonempty, convex, compact
optimal set, e.g.,~\cite{Fia:83}.
\end{proof}

The following Proposition shows that, we can embed the problem into an \SDP and solve it efficiently. 
\begin{prop}
The pair in \eqref{eq:pdpairnuclnorm} are equivalent to the following
\SDP primal-dual pair:
\begin{equation}
	\label{eq:pdpairnuclnormSDP}
	\begin{array}{lr}
	\begin{array}{cl}
		\min & \frac 12  \trace \left(W_1+W_2 \right) \\
		\text{s.t.} & Y = \begin{bmatrix}
		W_1  & M \cr
		M^T & W_2
		              \end{bmatrix} \succeq 0 \\
	    & \A (M) = b
	\end{array}
	\qquad
	\qquad
	\qquad
	\begin{array}{cl}
		\max_z & \langle b,z\rangle  \\
		\text{s.t.} & \begin{bmatrix}
		I_m  & \A^* (z) \cr
		\A^* (z)^T & I_n
		              \end{bmatrix} \succeq 0.
	\end{array}
	\end{array}
\end{equation}
\qed
\end{prop}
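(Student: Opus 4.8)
The plan is to reduce everything to a single variational identity for the nuclear norm, namely that for every $M \in \Rmn$,
\[
\|M\|_* = \min\left\{ \tfrac12 \trace(W_1 + W_2) : \bmat{W_1 & M \\ M^T & W_2} \succeq 0 \right\}.
\]
Granting this identity, the primal equivalence is immediate: the affine constraint $\A(M) = b$ does not involve $W_1, W_2$, so minimizing $\|M\|_*$ over feasible $M$ coincides with the joint minimization of $\tfrac12\trace(W_1+W_2)$ over all triples $(M, W_1, W_2)$ with $Y \succeq 0$ and $\A(M) = b$, which is exactly the primal \SDP in \eqref{eq:pdpairnuclnormSDP}. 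Thus the bulk of the work is establishing this lemma, which I would prove by matching upper and lower bounds.

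For the upper bound ($\leq$), I would exhibit a feasible point attaining $\|M\|_*$. Taking a thin SVD $M = U\Sigma V^T$ with $U, V$ having orthonormal columns and $\Sigma = \diag(\sigma_1, \ldots, \sigma_r)$, I set $W_1 = U\Sigma U^T$ and $W_2 = V\Sigma V^T$, so that
\[
\bmat{W_1 & M \\ M^T & W_2} = \bmat{U \\ V}\Sigma\bmat{U^T & V^T} \succeq 0,
\qquad
\tfrac12\trace(W_1 + W_2) = \trace \Sigma = \|M\|_*.
\]
For the lower bound ($\geq$), I would use the dual-norm representation $\|M\|_* = \max\{\langle M, X\rangle : \|X\| \leq 1\}$ implicit in \eqref{eq:pdpairnuclnorm}. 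Given any feasible $Y \succeq 0$ and any $X$ with $\|X\| \leq 1$, the test matrix $Q = \bmat{I_m & -X \\ -X^T & I_n}$ is positive semidefinite by the Schur complement, since $I_n - X^TX \succeq 0$ is equivalent to $\|X\| \leq 1$. Hence $\trace(YQ) \geq 0$, which upon expanding the block product reads $\trace(W_1) + \trace(W_2) - 2\langle M, X\rangle \geq 0$, i.e. $\tfrac12\trace(W_1 + W_2) \geq \langle M, X\rangle$. Maximizing over admissible $X$ gives $\tfrac12\trace(W_1+W_2) \geq \|M\|_*$, completing the lemma.

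The dual equivalence is then a one-line Schur-complement computation that does not even require the lemma: the constraint $\bmat{I_m & \A^*(z) \\ \A^*(z)^T & I_n} \succeq 0$ is equivalent to $I_n - \A^*(z)^T\A^*(z) \succeq 0$, i.e. to $\|\A^*(z)\| \leq 1$, which is precisely the dual constraint in \eqref{eq:pdpairnuclnorm}, while the objective $\langle b, z\rangle$ is identical. I expect the main obstacle to be the lower bound in the lemma: the upper bound and the dual constraint are essentially bookkeeping once the SVD and the Schur complement are in hand, whereas the lower bound carries the genuine duality content and hinges on selecting the correct semidefinite certificate $Q$ together with the dual characterization of the nuclear norm. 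A formal alternative would be to derive the dual \SDP mechanically from the primal \SDP and invoke the strong duality already guaranteed by the preceding results, but the direct Schur-complement route is cleaner and sidesteps re-deriving attainment.
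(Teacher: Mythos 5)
Your proof is correct, but there is no paper proof to compare it against: the proposition is stated with a closing QED box and no proof environment, the result being taken as known from \cite[Prop.~2.1]{Rechtparrilofazel} (the paper announces at the start of Section~\ref{sect:backgr} that it follows the known development in the literature). What you have supplied is a complete, self-contained derivation of that cited fact, and it is in substance the classical Recht--Fazel--Parrilo argument. Your key lemma, $\|M\|_* = \min\bigl\{\tfrac12\trace(W_1+W_2) : \bmat{W_1 & M \\ M^T & W_2}\succeq 0\bigr\}$, is exactly the \SDP characterization of the nuclear norm underlying the proposition; the upper bound via the thin SVD, the lower bound via the certificate $Q=\bmat{I_m & -X \\ -X^T & I_n}$ and $\trace(YQ)\ge 0$, the partial-minimization step (legitimate because $\A(M)=b$ does not constrain $W_1,W_2$, and the inner minimum is attained by your SVD construction), and the Schur-complement identification of the dual constraint with $\|\A^*(z)\|\le 1$ are all sound. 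The only external ingredient you invoke is the dual-norm identity $\|M\|_*=\max\{\langle M,X\rangle : \|X\|\le 1\}$, which the paper itself asserts without proof immediately after \eqref{eq:pdpairnuclnorm}, so this is consistent with the paper's conventions. What your route buys over the paper's treatment is self-containedness, plus an explicit solution correspondence between the two formulations (not merely equality of optimal values), which the paper implicitly relies on later when it reads off the block structure of the optimal $Y$ in Section~\ref{sect:nuclnorm}.
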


This means that after ignoring the $\frac 12$ we can further 
transform the sampling problem as:
\begin{equation}
\label{sdpnuclear}
\begin{array}{cl}
	\min & \|Y\|_* = \trace(Y) \\
	\text{s.t.}& \PP_{\bar{E}}(Y) = b\\
			  & Y \succeq 0,
\end{array}
\end{equation}
where \textdef{$\bar {E}$} is the set of indices in $Y$ that
correspond to $\hat E$, the known entries of the upper right block of
$\Zcal = \begin{bmatrix} 
0 & Z \\ Z^T & 0 \end{bmatrix}  \in \Smn$.
Here $Y \succeq 0 $ denotes the L\"owner partial order that
$Y$ is \textdef{positive semidefinite, $Y \in \Ss_+^{m+n}$}.
\index{$Y\succeq 0$, positive semidefinite}

When the data are contaminated with noise, we reformulate the strict equality constraint
by allowing the observed entries in the output matrix to be perturbed within 
a tolerance $\delta$ for the norm, where $\delta$ is normally a known 
noise level of the data, i.e.,
\begin{equation}
\label{sdpnuclearinexact}
\begin{array}{cl}
	\min & \|Y\|_* = \trace(Y) \\
	\text{s.t.}& \|\PP_{\bar{E}}(Y) - b\| \leq \delta\\
			  & Y \succeq 0.
\end{array}
\end{equation}

We emphasize that since there is no 
constraint on the diagonal blocks of $Y$, we can always obtain a 
positive definite feasible solution in this exact case
by setting the diagonal elements of $Y$ to be large enough. Therefore
strict feasibility, the \textdef{Slater constraint qualification}, 
always holds.

\subsection{Graph Representation of the Problem}
For our needs, we furthermore associate $Z$ with the 
\textdef{weighted undirected graph, $G=(V,E,W)$}, 
\index{$G=(V,E,w)$, weighted undirected graph}
with \textdef{node set}
$V=\{1,\ldots,m,m+1,\ldots,m+n\}$, \textdef{edge set $E$}, that satisfies
\index{$E$, edge set}
\index{$V$, node set}
\[
	\big\{ \{ij\in V\times V: i< j\leq m\}
	\cup \{ij\in V\times V: m+1\leq i < j\leq m+n\}\big\}
\subseteq E\subseteq \{ij\in V\times V: i< j\},
\]
and \textdef{weights} for all $ij\in E$
\[
W_{ij} = \begin{cases} Z_{i(j-m)}, &\forall ij\in \bar{E}\\ 0, &\forall
ij\in E \backslash \bar{E}. \end{cases}
\]
Note that as above,
$\bar{E}$ is the set of edges excluding the trivial ones, that is, 
\[
	\bar{E} = E \backslash \bigg\{ \{ij\in V\times V: i\leq j\leq m\}
\cup \{ij\in V\times V: m+1\leq i\leq j\leq m+n\}\bigg\}.
\]

We can now construct the \textdef{adjacency matrix, $A$}, for the
graph $G$ as follows
\index{$A$, adjacency matrix}
\begin{equation}
	\label{eq:Aadj}
	A_{ij}= \left\{ 
		\begin{array}{cl}
			1   & \text{if  } ij \in E \text{ or } ji \in E \cr
			0   & \text{otherwise}.
		\end{array}
		\right.
\end{equation}
Recall that a \textdef{clique} in the graph $G$ is a complete subgraph in $G$.
We have the trivial cliques
$C=\{i_1,\ldots, i_k\}\subset \{1,\ldots, m\}$ and
$C=\{j_1,\ldots, j_k\}\subset \{m+1,\ldots, m+n\}$, which are not of
interest to our algorithm.  The nontrivial cliques of interest
correspond to (possibly after row and column permutations) a 
full (specified) submatrix $X$ in $Z$. 
The cliques of interest are $C=\{i_1,\ldots, i_k\}$ with cardinalities
\begin{equation}
	\label{eq:cardspq}
	|C\cap \{1,\ldots, m\}|=p \neq 0, \quad
	|C\cap \{m+1,\ldots, m+n\}|=q \neq 0.
\end{equation}
This means that we have found
\begin{equation}
	\label{eq:Xspecif}
	X\equiv \{Z_{i(j-m)}: ij \in C\}, \quad \text{specified (fully known)
	$p\times q$ rectangular matrix}.
\end{equation}
These non-trivial cliques are at the center of our considerations.

%For $Y\in \Smn$, and let $P_{\bar{E}}(Y)\in \R^{\bar{E}}$ denote the projection of $Y$
%onto the elements with indices in $\bar{E}$, the set of vertices excluding the trivial ones $[\{ij\in V\times V: i\leq j\leq m\}
%	\cup \{ij\in V\times V: m+1\leq i\leq j\leq n\}]$.
\index{$\Sn$, space of real symmetric matrices}
\index{space of real symmetric matrices, $\Sn$}

\index{$\norm{M}_*$}

\section{Facial Reduction, Cliques, Exposing Vectors}
\label{sect:nuclnorm}
In this section we look at the details of solving the \SDP formulation of
the nuclear norm relaxation for \LRMCp. In
particular we show how to
exploit cliques in the graph $G$ and the \emph{special structure at the
optimum}. We note again that though strict feasibility holds for the \SDP
formulation, we can take advantage of facial reduction and efficiently
obtain low-rank solutions.

\subsection{Structure at Optimum}

The results in Section \ref{sect:backgr} can now be used to prove the following special
structure at the optimum. This structure is essential in our \FR scheme.
\begin{cor}
Let $M^*$ be optimal for the primal in \eqref{eq:pdpairnuclnormSDP} with
$\rank(M^*)=r_M$. Then there exist variables $W_1,W_2,z$ to complete the 
primal-dual pair for \eqref{eq:pdpairnuclnormSDP} such that
the compact spectral decomposition of the corresponding
optimal $Y$ in \eqref{eq:pdpairnuclnormSDP} can be written as
\begin{equation}
	\label{eq:YDUV}
0\preceq Y= \begin{bmatrix}
		W_1  & M^* \cr
	(M^*)^T & W_2
		              \end{bmatrix}
= 
\begin{bmatrix} U   \cr V \end{bmatrix}
	D
\begin{bmatrix} U   \cr V \end{bmatrix}^T
= 
\begin{bmatrix}  UDU^T & UDV^T     \cr 
		       VDU^T & VDV^T \end{bmatrix}, \quad
D \in \Ss^{r_Z}_{++},\, \rank Y=:\textdef{$r_Y$}=r_M.
\end{equation}
We get
\begin{equation}
	\label{eq:Winorm}
	W_1=UDU^T, \quad W_2=VDV^T, \quad
	 M^*=UDV^T, \qquad \|M^*\|_*=\frac 12\trace(Y)=\frac 12\trace(D).
 \end{equation}
\end{cor}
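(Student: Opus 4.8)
The plan is to proceed constructively: instead of analyzing an arbitrary optimal solution, I would exhibit one particular optimal $Y$ whose rank is forced to equal $r_M$, built directly from a singular value decomposition of $M^*$. The enabling observation is that, by the equivalence Proposition relating \eqref{eq:pdpairnuclnorm} and \eqref{eq:pdpairnuclnormSDP}, the optimal value of the \SDP primal equals $\|M^*\|_*$, and that $M^*$, being the $M$-block of a primal optimal solution, in particular satisfies $\A(M^*)=b$.

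First I would take a compact singular value decomposition $M^*=P\Sigma Q^T$, with $P\in\R^{m\times r_M}$, $Q\in\R^{n\times r_M}$ having orthonormal columns and $\Sigma=\diag(\sigma_1,\dots,\sigma_{r_M})\succ0$. Setting $W_1=P\Sigma P^T$ and $W_2=Q\Sigma Q^T$ yields
\[
Y=\begin{bmatrix} W_1 & M^* \\ (M^*)^T & W_2 \end{bmatrix}
=\begin{bmatrix} P \\ Q \end{bmatrix}\Sigma\begin{bmatrix} P \\ Q \end{bmatrix}^T\succeq 0.
\]
Since $\begin{bmatrix} P \\ Q \end{bmatrix}^T\begin{bmatrix} P \\ Q \end{bmatrix}=P^TP+Q^TQ=2I_{r_M}$ is nonsingular, the factor has full column rank and $\rank Y=\rank\Sigma=r_M$; this is the step that pins the rank down to exactly $r_M$ rather than something larger. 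Feasibility of $Y$ is immediate from $\A(M^*)=b$, and its objective value is $\tfrac12\trace(W_1+W_2)=\tfrac12(\trace\Sigma+\trace\Sigma)=\sum_i\sigma_i=\|M^*\|_*$, matching the optimal value; hence $Y$ is primal optimal.

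It then remains to cast $Y$ in the stated compact spectral form and read off the block identities. Normalizing, I would set $U=P/\sqrt2$, $V=Q/\sqrt2$ and $D=2\Sigma$, so that $\begin{bmatrix} U \\ V \end{bmatrix}$ has orthonormal columns, $D\in\Ss^{r_M}_{++}$, and $Y=\begin{bmatrix} U \\ V \end{bmatrix}D\begin{bmatrix} U \\ V \end{bmatrix}^T$; substituting back recovers $W_1=UDU^T$, $W_2=VDV^T$, $M^*=UDV^T$. For the norm identity I would use the orthonormality $U^TU+V^TV=I_{r_M}$ to write $\trace(Y)=\trace(D(U^TU+V^TV))=\trace(D)$, whence $\tfrac12\trace(Y)=\tfrac12\trace(D)=\trace\Sigma=\|M^*\|_*$. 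The dual variable $z$ needed to complete the primal-dual pair is supplied directly by the strong duality and attainment already established (the Corollary asserting nonempty, compact optimal sets). The only genuinely substantive step is the rank count; the remainder is routine bookkeeping around the $\sqrt2$ normalization together with the appeal to the equivalence Proposition to identify the objective value with $\|M^*\|_*$.
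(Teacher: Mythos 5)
Your proposal is correct and takes essentially the same route as the paper's proof: both construct $Y$ from the compact SVD of $M^*$ via the normalization $U=U_M/\sqrt{2}$, $V=V_M/\sqrt{2}$, $D=2\Sigma_M$, verify optimality through the trace identity $\trace(Y)=2\|M^*\|_*$, and obtain the dual $z$ from strong duality (Slater) established earlier. Your write-up is merely more explicit about the rank count $\rank Y=r_M$ and the feasibility/optimality check, steps the paper leaves implicit.
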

\begin{proof}
Let $M^*=U_M\Sigma_M V_M^T$ be the compact SVD with $\Sigma_M \in \Ss^{r_M}_{++}$
on the diagonal. Let 
\[
D=2\Sigma_M, \quad U=\frac 1{\sqrt{2}}U_M,\, V=\frac 1{\sqrt{2}}V_M,
\qquad
Y=\begin{bmatrix} U   \cr V \end{bmatrix}
	D
\begin{bmatrix} U   \cr V \end{bmatrix}^T.
\]
Then the matrix $\begin{bmatrix} U   \cr V \end{bmatrix}$ has
orthonormal columns and $\trace Y=2\trace(\Sigma_M)=2\|M\|_*$.
Therefore \eqref{eq:Winorm} holds. Since $Y$ is
now primal optimal and Slater's condition holds for the primal
problem, there must exist $z$ optimal for the dual.
\end{proof}

%\subsubsection{Exploiting Special Structure at Optimum for \FRp}
Now suppose that there is a \textdef{specified submatrix, $X \in \R^{p \times q}$}, of 
\index{$X \in \R^{p \times q}$, specified submatrix}
 $Z\in \Rmn, \rank(X) =\textdef{$r_X$}$. Without loss of generality,
 after row and column permutations if needed, we can assume that
\[
	Z=
\begin{bmatrix}  Z_1  & Z_2     \cr 
		       X & Z_3 \end{bmatrix},
\]
and we have a full rank factorization $X=\bar P\bar Q^T$ obtained using
the compact SVD 
\[
X=\bar P\bar Q^T=U_XD_X V_X^T, \, D_Z \in \Ss^{r_X}_{++},\quad
\bar P=U_X D_X^{1/2},\,
\bar Q=V_X D_X^{1/2}.
\]
Note that a desirable $X$ that corresponds to a clique in $G$ is given by
\[
	\textdef{$C_X=\{i,\ldots,m,m+1,\ldots,m+k\}$}, \qquad
r <\max\{p, q\},
\]
where we denote the \textdef{target rank, $r$}. We can
exploit the information using these cliques to obtain exposing vectors
of the \textdef{optimal face}, i.e.,~the smallest face of $\Ss^{m+n}_+$
that contains the set of optimal solutions.
\index{$r$, target rank}

By abuse of notation, we can rewrite the optimality form in
\eqref{eq:YDUV} as
\begin{equation}
	\label{eq:Ypartit}
0\preceq Y 
= 
\begin{bmatrix} U \cr P \cr Q  \cr V \end{bmatrix}
	D
\begin{bmatrix} U \cr P \cr Q  \cr V \end{bmatrix}^T
= 
\left[
\begin{array}{c|cc|c}  
	UDU^T & UDP^T& UDQ^T& UDV^T     \cr 
	\hline
	PDU^T & PDP^T& PDQ^T& PDV^T     \cr 
	QDU^T & QDP^T& QDQ^T& QDV^T     \cr 
	\hline
	VDU^T & VDP^T& VDQ^T& VDV^T     \cr 
		       \end{array}
\right].
\end{equation}
We see that $X=PDQ^T = \bar P\bar Q^T$. 
Since $X$ is \emph{big enough}, we conclude that generically $r_X=r_Y=r$,
see Lemma \ref{statlemma} below, and that the ranges satisfy
\begin{equation}
	\label{eq:PQbar}
\Range(X) = \Range(P) = \Range(\bar{P}), \qquad
\Range(X^T) = \Range(Q) = \Range(\bar{Q}).
\end{equation}
This is the key for facial reduction as we can use an \textdef{exposing vector}
formed from $\bar P$ and/or $\bar Q$. 
\begin{lemma}[Basic \FRp] Let $r<\min\{p,q\}$ and
let  \eqref{eq:Ypartit}, \eqref{eq:PQbar} hold with $X=PDQ^T = \bar P\bar Q^T$, 
found using the full rank factorization.  Let $Y$ be an optimal solution 
of the primal problem in \eqref{eq:pdpairnuclnormSDP}.
Define $(\bar U,\bar V)=\FRp(\bar P,\bar Q)$ by
\begin{equation}
	\label{eq:qpuv}
	\FRp(\bar P,\bar Q): \quad
	\bar P \bar P^T + \bar U \bar U^T \succ 0, \quad \bar P^T \bar U=0,  
	\qquad
	\bar Q \bar Q^T + \bar V \bar V^T \succ 0, \quad \bar Q^T \bar V=0.
\end{equation}
By abuse of notation, suppose that both matrices
$\bar U \leftarrow \bar U \bar U^T,\bar V \leftarrow \bar V \bar V^T$ are
filled out with zeros above and below
so their size is that of $Y$ and let $W=
\bar U+ \bar V$. Then $\bar U, \bar V, W$ are all exposing vectors for
the optimal face, i.e.,~for $W$ we have
 $W\succeq 0, WY=0$. Moreover, if $T$ is a
full column rank matrix with the columns forming a basis for 
$\mathcal{N}(W)$, the null space of $W$, 
then a facial reduction step for the optimal face is the substitution
\[
	Y= T RT^T, \, R \in \Ss_+^{(n+m)-(p+q-2r)}.
\]
\end{lemma}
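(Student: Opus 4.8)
The plan is to establish the three claimed properties of the exposing vectors $\bar U, \bar V, W$ in sequence, and then derive the facial reduction substitution as a standard consequence. The core identity driving everything is the range characterization in \eqref{eq:PQbar}, namely $\Range(P)=\Range(\bar P)$ and $\Range(Q)=\Range(\bar Q)$, together with the block structure of the optimal $Y$ in \eqref{eq:Ypartit}. First I would verify that $\bar U$ (filled out with zeros to size $m+n$) is an exposing vector, i.e. $\bar U\bar U^T\succeq 0$ and $(\bar U\bar U^T)Y=0$. Positive semidefiniteness is immediate since $\bar U\bar U^T$ is a Gram matrix. The key is the orthogonality $YW=0$; I would compute this blockwise.

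\emph{The main work} is showing $(\bar U\bar U^T)Y=0$. Since $\bar P^T\bar U=0$ by \eqref{eq:qpuv} and $\Range(P)=\Range(\bar P)$, we get $P^T\bar U=0$ as well; hence $\bar U$ is orthogonal to the column space of $P$. Now in the factorization $Y=\begin{bmatrix} U\cr P\cr Q\cr V\end{bmatrix}D\begin{bmatrix} U\cr P\cr Q\cr V\end{bmatrix}^T$, the exposing vector $\bar U$ lives in the block rows corresponding to $P$ (after the zero-padding). Left-multiplying $Y$ by $\bar U\bar U^T$ picks out exactly the contribution $\bar U\bar U^T P\, D\,[\,\cdots\,]^T$ in those rows, and $\bar U\bar U^TP=\bar U(\bar U^TP)=\bar U(P^T\bar U)^T=0$ kills every block. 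So $(\bar U\bar U^T)Y=0$. The identical argument with $\bar Q^T\bar V=0$ and $\Range(Q)=\Range(\bar Q)$ gives $(\bar V\bar V^T)Y=0$. Then for $W=\bar U\bar U^T+\bar V\bar V^T$ we have $W\succeq 0$ as a sum of two positive semidefinite matrices, and $WY=(\bar U\bar U^T)Y+(\bar V\bar V^T)Y=0$, establishing that $W$ exposes the optimal face.

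\textbf{The hard part} is really the dimension count in the final substitution $Y=TRT^T$ with $R\in\Ss_+^{(n+m)-(p+q-2r)}$. Here I would argue that $W\succeq 0$ with $WY=0$ forces $\Range(Y)\subseteq\mathcal N(W)$, so every optimal $Y$ factors through a basis $T$ of $\mathcal N(W)$; this is the standard facial reduction step. To get the stated dimension, I must compute $\dim\mathcal N(W)=(m+n)-\rank(W)$. Since $\bar U\bar U^T$ and $\bar V\bar V^T$ occupy disjoint block positions (the $P$-rows of size $p$ and the $Q$-rows of size $q$, which do not overlap), $\rank(W)=\rank(\bar U\bar U^T)+\rank(\bar V\bar V^T)=\rank(\bar U)+\rank(\bar V)$. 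From \eqref{eq:qpuv}, $\bar U$ spans the orthogonal complement of $\Range(\bar P)$ within $\R^p$, so $\rank(\bar U)=p-r_X=p-r$, and likewise $\rank(\bar V)=q-r$. Hence $\rank(W)=(p-r)+(q-r)=p+q-2r$ and $\dim\mathcal N(W)=(m+n)-(p+q-2r)$, which matches the claimed size of $R$.

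\emph{I expect the disjoint-block / rank-additivity observation to be the step most prone to subtlety}, since it relies on the precise zero-padding convention: $\bar U$ and $\bar V$ must be embedded into non-overlapping coordinate blocks for the ranks to simply add. I would state this padding explicitly before the count. Everything else reduces to the orthogonality $YW=0$ already proved and the general principle that an exposing vector contracts the problem to the face $T\Ss_+^{\dim\mathcal N(W)}T^T$; optimality of $Y$ and the inclusion $\Range(Y)\subseteq\mathcal N(W)$ guarantee no optimal solution is lost.
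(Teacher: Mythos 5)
Your proof is correct and follows essentially the same route as the paper's: showing $\bar U,\bar V$ expose the face by construction (via the orthogonality $\bar U^T P=0$, $\bar V^T Q=0$ coming from \eqref{eq:PQbar}), using that a sum of exposing vectors is exposing, and getting the dimension count from the disjoint block positions of the padded $\bar U\bar U^T$ and $\bar V\bar V^T$. The only difference is one of detail: you carry out the blockwise computations and the rank additivity explicitly, whereas the paper states these facts and defers the details to the cited references.
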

\begin{proof}
That $\bar U, \bar V$ are exposing vectors is by construction. The
result follows from the fact that the sum of exposing vectors is an
exposing vector. Moreover, the block diagonal structure of the exposing
matrices guarantees that the ranks add up to get the size of $R$. 
(More details are available in \cite{DrPaWo:14,ChDrWo:14}.)
\end{proof}

\subsection{Cliques, Weights and Final Exposing Vector}
Given a partial matrix $Z\in \Rmn$, we need to find nontrivial cliques according
to the definition in \eqref{eq:cardspq} and \eqref{eq:Xspecif}.
Intuitively, we may want to find cliques with size as large as possible so that 
we can expose $Y$ immediately. However, we do
not want to spend a great deal of time finding large cliques.
Instead we find it is more efficient to find many medium-size cliques 
that can cover as many 
vertices as possible. We can then add the exposing vectors obtained 
from these cliques to finally expose a small face containing the optimal $Y$.  
This is equivalent to dealing with a small number of large cliques.
This consideration also comes from the expensive computational cost of 
the eigenvalue calculation for
$\bar U,\bar V$ in \eqref{eq:qpuv} when the clique is large.

The cliques are found through using the \textdef{adjacency matrix} 
defined above in \eqref{eq:Aadj}. We can then use these cliques to 
find a set of exposing vectors. Specifically, we can obtain at most two 
useful exposing vectors from each of the cliques we found.
Exposing vectors are useful only  if they are nonzero. To get a nonzero
exposing vector we need the sizes of the sampling matrix to be
sufficiently large, i.e.,~a
useful exposing vector requires that the diagonal block formed from 
one of the full rank  decomposed parts of this clique has a 
\emph{correct rank} and correct size. 
The correct rank and size of the diagonal block depend on the size and rank 
of the submatrix $X$. In particular, we want at least one of 
$\bar U, \bar V$ in \eqref{eq:qpuv} to be nonzero, in which case, 
we say the clique is useful. We illustrate this in detail in Algorithm 
\ref{alg:findexposvctr}.

The following Lemma shows that, generically, we can restrict the 
search to cliques corresponding to a specified submatrix
$X \in \R^{p\times q}$ such that $\min\{p,q\} \geq r$ without losing rank
magnitude, where $p$ and $q$ are defined in (\ref{eq:cardspq}). This means 
if either $p > r$ or $q > r$, we can obtain a useful exposing vector based on 
that part of the clique.

\begin{lemma}
\label{statlemma}
Let $Z \in \Rmn$ be a random matrix where the entries come from a
continuous distribution.  Suppose that $\rank(Z) = r$ and
$X\in \R^{p \times q}$ is a specified partial matrix obtained from $Z$ with 
$\min\{p,q\} \geq r$. Then $\rank(X) = r$ with probability $1$
(generically).

In terms of the above notation, let
	\[
		Y,Z, r_Y, r_Z, r_{X},p,q,
	\]
be defined as above with $r_Z \leq \min\{p,q\}$. 
Then generically
\[
	\rank \left(	\begin{bmatrix} PDP^T\end{bmatrix}\right)
=\rank \left(	\begin{bmatrix} QDQ^T\end{bmatrix}\right) =
	 r_Z=r_{X}.
\]
\end{lemma}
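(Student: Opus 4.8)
The plan is to prove the two assertions in sequence, obtaining the second from the first together with the positive definiteness of $D$.

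For the first assertion, the key preliminary is to fix the correct probabilistic model: a matrix with literally independent continuous entries has rank $\min\{m,n\}$, so the constraint $\rank(Z)=r$ forces us to view $Z$ as a generic rank-$r$ matrix, i.e.,~a full rank factorization $Z=AB^T$ with $A\in\R^{m\times r}$, $B\in\R^{n\times r}$ whose entries are drawn from a continuous distribution. The specified submatrix then factors as $X=A'(B')^T$, where $A'$ is the $p\times r$ row-submatrix of $A$ indexed by the rows of $X$ and $B'$ is the $q\times r$ row-submatrix of $B$ indexed by the columns of $X$. Since $\min\{p,q\}\geq r$, both $A'$ and $B'$ have at least $r$ rows, and I would argue that each has full column rank $r$ with probability one: the existence of a nonzero $r\times r$ minor is a polynomial condition in the entries, the polynomial is not identically zero (some choice of values makes it nonvanishing), so its zero set has Lebesgue measure zero under any continuous distribution. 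With $A',B'$ of full column rank $r$ we get $\Range(X)=\Range(A')$, hence $\rank(X)=r$.

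For the second assertion I would first reduce the two rank computations using $D\succ0$. Since $D\in\Ss^{r_Z}_{++}$ has an invertible square root, $PDP^T=(PD^{1/2})(PD^{1/2})^T$, so $\rank(PDP^T)=\rank(PD^{1/2})=\rank(P)$, and likewise $\rank(QDQ^T)=\rank(Q)$. It therefore suffices to show $\rank(P)=\rank(Q)=r_Z$. Here I invoke the first assertion: with $r=r_Z$ and $\min\{p,q\}\geq r_Z$, the block $X$ satisfies $r_X=\rank(X)=r_Z$ generically. Now $X=PDQ^T$ from \eqref{eq:Ypartit} with $D$ invertible, so $r_Z=\rank(X)=\rank(PDQ^T)\leq\min\{\rank(P),\rank(Q)\}$. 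On the other hand $P$ is $p\times r_Z$ and $Q$ is $q\times r_Z$, so each has at most $r_Z$ columns and hence $\rank(P),\rank(Q)\leq r_Z$. These two inequalities pinch both ranks to $r_Z$, which gives $\rank(PDP^T)=\rank(QDQ^T)=r_Z=r_X$.

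The rank identities for products with an invertible factor are routine. The step needing care is the genericity argument in the first assertion: pinning down the factored model $Z=AB^T$ so that the hypotheses are consistent, and verifying that the row/column index sets defining $X$ select exactly the submatrices $A'$ and $B'$ of the factors, so that the relevant minors are genuine nonvanishing polynomials. I expect this modeling-and-bookkeeping point to be the main obstacle; once it is settled, the pinching argument of the last paragraph closes the proof cleanly.
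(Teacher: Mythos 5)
Your proposal is correct, and it takes a genuinely different --- and more careful --- route than the paper. The paper's proof addresses only the first assertion: it appeals to lower semicontinuity of the rank function, then argues ``more precisely'' that a rank deficiency would force a vanishing combination $y=\sum_{i=1}^r a_i x^i=0$ of the columns of $X$, and concludes that $y_1=\sum_{i=1}^r a_i x^i_1$ is a continuous random variable so that $P(y_1=0)=0$. That argument is heuristic at best: the coefficients $a_i$ are themselves functions of $X$ (chosen precisely so that $y=0$), so $y_1$ is not a fixed linear combination of continuous random variables, and the paper never confronts the fact that its literal hypothesis --- independent continuous entries together with $\rank(Z)=r<\min\{m,n\}$ --- describes a probability-zero event. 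Your two departures are exactly what repairs this. First, you recast the model as a generic full-rank factorization $Z=AB^T$, which makes the hypotheses consistent and coincides with the data-generation scheme the paper itself uses in its numerics ($Z=Z_LZ_R^T$ with i.i.d.\ normal factors); then $\rank(X)=r$ follows from almost-sure full column rank of the sub-factors $A',B'$ via the standard argument that the zero set of a nonzero polynomial (an $r\times r$ minor) has measure zero. Second, you actually prove the lemma's second display, which the paper's proof never touches: $\rank(PDP^T)=\rank(P)$ and $\rank(QDQ^T)=\rank(Q)$ since $D\succ 0$, and then the pinch $r_Z=\rank(X)=\rank(PDQ^T)\le\min\{\rank(P),\rank(Q)\}\le r_Z$, the last inequality because $P$ and $Q$ have $r_Z$ columns. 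The only point to make explicit in a final write-up is the measure-theoretic convention: for a polynomial's zero set to carry probability zero, ``continuous distribution'' should be read as absolutely continuous (having a density), which is the standard reading and the one the paper implicitly relies on as well.
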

\begin{proof}
Recall that the $\rank : \Rmn \to \N$ is a lower semi-continuous function.
Therefore, arbitrary small perturbations can increase the rank but not
decrease it. The result now follows since the rank of a submatrix is
bounded above by $r$.

More precisely, without loss of generality, we can
suppose that $X = [x^1, ..., x^r]$, where $x^i \in \Rp$ are
the column vectors of $X$, with $p \geq r$. 
If $\rank(X) < r$, then there exists
$\{a_1, ..., a_r\} \subset \R$ such that $y = \sum_{i=1}^r a_ix^i = 0$.

The first element of this vector is $y_1 = \sum_{i=1}^r a_ix_{1}^i$. 
Since $x_{1}^i$ 
comes from a continuous distribution then so does $y_1$, and 
the probability
$\text{P}(\rank(X) < r) \leq \text{P}(y_1 = 0) = 0$. Thus, it must be that 
$\rank(X) = r$ for $X$ of the appropriate size given in the lemma.
\end{proof}

With the existence of noise, we know that generically 
the $X$ found can only have 
a higher rank but not a lower rank than r. In this case, since we know 
the correct rank of $X$, we can adjust the exposing vector so that it will 
not over-expose the completion matrix.

After finding a clique corresponding to a sampled submatrix and its full
rank factorization $X=\bar P\bar Q^T$, we then construct a sized
\textdef{clique weight, $u^i_X$},
\index{$u_X$, clique weight, $u^i_X, i=P,Q$}
to measure how \emph{noisy} the corresponding exposing vector is. 
We essentially use the \textdef{Eckart-Young distance} to the
nearest matrix of rank $r$ on the semidefinite cone and include
the size. If the problem is
\emph{noiseless}, then generically we expect this distance to be $0$,
since submatrices of sufficient size yield either $\bar P\bar P^T$ or 
$\bar Q\bar Q^T$ to be rank $r$.

\begin{defi}[clique weights]
Let $X=\bar P\bar Q^T$ denote a $p\times q$ sampled submatrix  with its
full rank factorization. If $p>r$ let $B=\bar P\bar P^T=UDU^T$ be the spectral
decomposition with eigenvalues $\lambda_j, j=1,\ldots,p$ in
nondecreasing order. Define the \textdef{clique weight, $u^i_X$}, with
$i=p$
\index{$u^i_X$, clique weight}
$$ 
u^p_X := \frac{\sum_{i = 1}^{p - r} \lambda_i^2 + 
\sum_{i = p - r + 1}^{p}(\min\{0, \lambda_i\})^2}{0.5 p(p - 1)}.
$$
If $q>r$, repeat with $\bar Q$ and $q$.
\end{defi}
\begin{defi}[exposed vector weights]
Define the \textdef{exposed vector weight, $w^i_X$}, as
\index{$w^i_X$, exposed vector weight}
$$w^i_X = 1 - \frac{u^i_X}{\text{sum of all existing clique weights}},
\quad i=p,q.
$$
\end{defi}

Algorithm \ref{alg:findexposvctr} summarizes how to find an exposing
vector $Y_{expo}$ for our optimal $Y$ for the minimum nuclear norm problem.
This exposing vector locates a face containing $Y$ which is
$$F_Y = V\Ss_+^{r_v}V^T$$
where V is from the spectral decomposition of $Y_{expo}$
$$Y_{expo} = \begin{bmatrix} U & V\end{bmatrix}
	\begin{bmatrix} \Sigma & 0 \\ 0 & 0\end{bmatrix}
	\begin{bmatrix} U & V \end{bmatrix}^T,$$
%We then let find $V$ where the columns provide an orthonormal basis for
%$\Null Y_{expo})$, 
such that $\Sigma \succ 0$. In other words,~\FR yields a representation of $Y$ as
\begin{equation}
	\label{eq:VRVt}
	Y=VRV^T, \quad \text{for some} \quad R\in \Ss_+^{r_v},
\end{equation}
where we hope that we have found enough cliques to get the reduction $r_v=r$.
We now aim to find an appropriate $R$.
\begin{algorithm}[ht]%[Exposing vector for $Y$][H]
\caption{Finding Exposing Vectors}
	\begin{algorithmic}[1]
	\label{alg:findexposvctr}
	\STATE \textbf{INPUT:} A partial matrix $Z \in \MM^{m \times n}$, 
		target rank $r$, clique size range $\{minsize, maxsize\}$;
		\STATE \textbf{OUTPUT:} A final \underline{exposing
		vector} that exposes a face containing the matrix 
		$Y \in \Smn$ formed by $Z$
	\STATE \textbf{PREPROCESSING:} \\
		1. form the corresponding adjacency matrix $A$; \\
		2. find a set of cliques $\Theta$ from $A$ of size within 
			the given range;
	\FOR{each clique $X \in \Theta$}
	\STATE 
		$[p,q] \leftarrow \size(X)$; \\
		$[P,Q] \leftarrow \text{FullrankDecompose}(X)$;
		\IF{$p > r$}
		\STATE 
			$W \leftarrow PP^T$; \\
			$[U_X^p,D] \leftarrow \eig(W)$, eigenvalues in
			nondecreasing order; \\
		 calculate clique weight $u^p_{X}$;
		\ENDIF
		\IF{$q > r$}
		\STATE 
			$W \leftarrow QQ^T$; \\
			$[U_X^q,D] \leftarrow \eig(W)$, eigenvalues in
			nondecreasing order\\
		 calculate clique weight $u^q_{X}$;
		\ENDIF
	\ENDFOR
	\STATE  calculate all the exposing vector weights $w^i_{X},
	i=p,q, X\in \Theta$ from \underline{existing} clique weights;\\
	\STATE sum over existing weights using nullity eigenspaces
\[
	\textdef{$Y_{expo}$} \leftarrow \sum_{\stackrel{i=p,q}{w^i_X exists, X \in \Theta}} 
w^i_X\left(U_X^i(:,1:i-r)(U_X^i(:,1:i-r))^T\right);
\]
	\RETURN $Y_{expo}$
	\end{algorithmic}
\end{algorithm}

\section{Dimension Reduction and Refinement}
\label{sect:dimred}
After \FR the original $Y$ can be expressed as $Y = VRV^T$,
where $R \in \Ss^{r_v}$ and $V^TV=I$. This means the problems
\eqref{sdpnuclear} and \eqref{eq:pdpairnuclnormSDP} are
in general reduced to the much smaller dimension $r_v$. And if we find
enough cliques we expect a reduction to $r_v=r$, the target rank.

\subsection{Noiseless Case}
The expression of $Y$ after \FR means we now turn to solve the nuclear 
norm minimization problem
\begin{equation}
\label{frnuclearnoiseless}
\begin{array}{cl}
	\min &  \Trace (R) \quad (=\Trace(VRV^T))  \\
	\text{s.t.}& \PP_{\bar E}(VRV^T) = b\\
	& R \succeq 0,
\end{array}
\end{equation}
where $b=\PP_{\hat E}(Z)$. 
The \FR typically results in many of the linear equality
constraints becoming redundant. We use the compact QR
decomposition\footnote{We use $[\sim,R,E]=qr(\Phi,0)$ to find the list of
constraint for a well conditioned representation, where $\Phi$ denotes the matrix of constraints.}
to identify which constraints to choose that result in a linearly
independent set with a relatively low condition number.
%MATLAB command \emph{licols} that is based on the economy sized
%$QR$-decomposition and produces a \emph{good} set
%of constraints that are linearly independent. 
Thus we have eliminated a portion of the sampling and we get the linear
system
\begin{equation}
	\label{eq:smallsyst}
	\MM(R):=	\PP_{\tilde E}(VRV^T) =  \tilde b, \text{  for some } 
	        \tilde E \subseteq \bar E,
\end{equation}
and $\tilde {b}$ is the vector of corresponding elements in $b$.

\begin{enumerate}
	\item
		\label{item:pd}
We first solve the simple semidefinite constrained least squares problem
\[
	\min_{R \in \Ss_+^{r_v}} \|(V R V^T)_{\tilde E} - \tilde b) \|
\]
If the optimal $R$ has the correct target rank, then the exactness of the data
implies that necessarily the optimal value is zero; and we are done.

\item If $R$ does not have the correct rank in Item \ref{item:pd} above, 
then we solve \eqref{frnuclearnoiseless} for our minimum nuclear norm solution.
%The results are reported in Tables
%\ref{table:noiseless2},\ref{table:noiseless3}.
We note that the linear transformation $\MM$ in \eqref{eq:smallsyst}
may not be one-one.
Therefore, we often need to add a small regularizing term to the
objective, i.e.,~we use
	$\min  \trace(R) + \gamma \|R\|_F$ with small $\gamma>0$.
\end{enumerate}

\subsection{Noisy Case}
\subsubsection{Base Step after Facial Reduction}
As for the noiseless case we complete \FR and expect the dimension
of $R$, $r_v$, to be reduced dramatically.
We again begin and solve the simple semidefinite
constrained least squares problem
\[
	\delta_0 = \min_{R \in \Ss_+^{r_v}} \|(VRV^T)_{\bar E} - b) \|, 
	\quad b=Z_{\hat E}. 
\]

However, unlike in the noiseless case, we cannot remove redundant constraints, even
though there may be many. This problem is now highly overdetermined and
may also be ill-posed in that the constraint transformation may not be
one-one. We  use the notion of \textdef{sketch matrix} to reduce the size of
the system, e.g.,~\cite{PolanciWainwright:15}. The matrix $A$ is a
random matrix of appropriate size with a relatively small number of rows
in order to dramatically decrease the size of the problem. As noted
in~\cite{PolanciWainwright:15}, this leads to surprisingly good results.
If $s$ is the dimension of $R$, then
we use a random sketch matrix of size $2t(s)\times |\hat E|$, where
$t(\cdot)$ is the number of variables on and above the diagonal of a
symmetric matrix, i.e.,~the triangular number
$$t(s) = \frac{s(s+1)}{2}.$$

If the optimal $R$ has the correct target rank, then we are done.

\subsubsection{Refinement Step with Dual Multiplier}
If the result from the base step does not have the correct rank, we 
now use this $\delta_0$ as a best target value for our parametric
approach as done in~\cite{ChDrWo:14}.
Denoting $b=Z_{\hat E}$ as the vector of 
known entries in $Z$ in column order, our minimum nuclear norm
problem can be stated as:
\begin{equation}
\label{frsdpnuclear}
\begin{array}{cl}
	\min & \trace(R) \\
	\text{s.t.}& \|(VRV^T)_{\hat E} - b) \| \leq \delta_0 \\
	& R \succeq 0.
\end{array}
\end{equation}

To ensure a lower rank solution is obtained through this process, we use the approach in~\cite{ChDrWo:14} and \emph{flip}
this problem:
\begin{equation}
	\label{eq:flipNoisyfinalFRSDP}
	\begin{array}{cl}
\varphi(\tau) := \min &  
		    \| \left(\hat V R \hat V^T\right)_{\hat E}  - b \| 
		    	+\gamma \|R\|_F  \\
		\text{s.t.} &  \trace(R) \leq \tau\\
			    & R \succeq 0.
	\end{array}
\end{equation}
As in the noiseless case, the least squares problem may be
underdetermined. We add a
regularizing term $+\gamma \|R\|_F$ to the objective with $\gamma>0$
small. The starting value of $\tau$ is obtained from the unconstrained least 
squares problem, and from which we can shrink the trace of $R$ to reduce the 
resulting rank. We refer to this process as the refinement step. 

This process requires a tradeoff between low-rank and low-error. 
Specifically, the trace constraint may not be tight at the starting value 
of $\tau$, which means we can lower the trace of $R$ without sacrificing accuracy, however, if the trace is pushed lower than necessary, 
the error starts to get larger. To detect the balance point between 
low-rank and low-error, we exploit \textdef{dual multiplier} of the
inquality constraint.
The value of the dual variable indicates the rate of increase
of the objective function. When the 
the dual multiplier becomes positive then we know that decreasing
$\tau$ further will increase the residual value. We have used the value
of $.01$ to indicate that we should stop decreasing $\tau$.

%We decide the value of $\gamma =????$ by ????????????

\section{Numerics}
\label{sect:numers}
We now present experiments with the algorithm on random instances. 
Averages (Times, Rank, Residuals) 
on \underline{\bf five} random instances are included in the table
\footnote{The density $p$ in the tables are reported as ``mean($p$)'' because
the real density obtained is usually not the same as the one set for 
generating the problem. We report the mean of the real densities over the 
five instances.}.
%%
%%We first specify our data generation procedure, and then present a 
%%separate section with the noiseless case.
%%We used instances with relatively large density. These instances
%%do not require any refinement and so no refinement columns are
%%included, see Tables \ref{table:noiseless2},\ref{table:noiseless3}. 
%%Noiseless instances with
%%lower density appear as the first instances in the noisy tables.
%%
In the noisy cases we include the output for both before refinement and
after refinement. (Total time for both is give after refinement.)
We see that in most cases with sufficient density 
refinement is \emph{not} needed.
And, we see that near perfect completion (recovery) is obtained relative to the
noise.  In particular, the low target rank was attained most times.

The tests were run on MATLAB version R2016a, 
on a Dell Optiplex 9020, with Windows 7,
Intel(R) Core(TM) i7-4770 CPU @ 3.40GHz and 16 GB RAM.
For the semidefinite constrained least problems we used the MATLAB addon
CVX \cite{cvxunpubl} for simplicity. This means our cputimes could be
improved if we replaced CVX with a recent \SDP solver.

\subsection{Simulated Data}
We generate the instances as done in the recent work \cite{FangLiuTohZhou:15}.
The target matrices are obtained from
$Z=Z_LZ_R^T$, where $Z_L \in \mathbb R^{m \times r}$ and 
$Z_R \in \mathbb R^{r \times n}$.
Each entry of the two matrices $Z_L$ and $Z_R$ is generated independently 
from a standard normal distribution $N(0,1)$.
For the noisy data, we perturb the known entries by additive noise, i.e., 
\[
	Z_{ij} \leftarrow Z_{ij}+ \sigma \xi_t \|Z\|_\infty, 
	       \quad \forall ij \in \bar E,
\]
where $\xi_t \sim N(0,1)$ and $\sigma$ is a noise factor that can be changed.

We evaluate our results using the same measurement as in 
\cite{FangLiuTohZhou:15}, which we call ``Residual'' in our tables. 
It is calculated as:
	$$\text{Residual} = \frac{\|\hat{Z} - Z\|_F}{\|Z\|_F},$$
where $Z$ is the target matrix, $\hat{Z}$ is the output matrix that we
find, and $\norm{\cdot}_F$ is the Frobenius norm.
\index{Frobenius norm, $\norm{\cdot}_F$}
\index{$\norm{\cdot}_F$, Frobenius norm}

%or?????\cite{MR3440180} deal with ????? we
%compare ??????
%We use random generation to obtain the data matrix Z in the following way ...

We observe that we far outperform the results in \cite{FangLiuTohZhou:15}
both in accuracy and in time; and we solve much larger problems.
We are not as competitive for the low density problems as our method
requires a sufficient number of cliques. We could combine our
preprocessing approach using the cliques before the method
in \cite{FangLiuTohZhou:15} is applied.

%%s dt (r, SR) 0 500 (5, 0.10) 1.1 × 10-3
%%                    (5, 0.15) 7.7 × 10-4
%%                    (10, 0.10) 5.5 × 10-2
%%                    (10, 0.15) 1.3 × 10-3
%%1000 (5, 0.10) 1500 
%%     (5, 0.15) 
%%     (5, 0.20) 
%%     (10, 0.10) 
%%     (10, 0.15) 
%%     (10, 0.20) 
%%0.01
%%RE
%%Max
%%Time
%%Hybrid
%%RE Time RE Time
%%6.9 4.1 × 10-2 12.0 4.0 × 10-2 12.5
%%7.0 3.8 × 10-2 11.1 2.9 × 10-2 13.4
%%8.0 1.2 × 10-1 11.4 2.9 × 10-2 13.4
%%8.6 3.8 × 10-2 11.7 2.3 × 10-2 12.0
%%8.8 × 10-4 44.8 2.9 × 10-2 110.4 1.9 × 10-2 115.1
%%   6.6 × 10-4 43.3 1.9 × 10-2 111.3 1.8 × 10-2 114.3
%%      5.5 × 10-4 44.6 1.8 × 10-2 112.4 6.7 × 10-3 120.0
%%         1.5 × 10-3 44.4 2.9 × 10-2 108.7 2.0 × 10-2 121.7
%%            1.0 × 10-3 45.8 2.0 × 10-2 112.8 1.3 × 10-2 117.8
%%               8.3 × 10-4 45.5 1.5 × 10-2 110.8 8.9 × 10-3 117.3
%%(5, 0.10) 8.1 × 10-4 162.8 2.3 × 10-2 385.4 1.2 × 10-2 408.2
%%(5, 0.15) 6.3 × 10-4 158.3 1.7 × 10-2 396.9 1.1 × 10-2 406.6
%%(5, 0.20) 5.3 × 10-4 158.1 1.3 × 10-2 410.9 5.6 × 10-3 405.3
%%(10, 0.10) 1.3 × 10-3 165.9 2.0 × 10-2 413.8 1.5 × 10-2 413.3
%%(10, 0.15) 9.5 × 10-4 160.8 1.4 × 10-2 410.1 1.3 × 10-2 423.2
%%(10, 0.20) 7.8 × 10-4 161.0 1.2 × 10-2 395.1 7.0 × 10-3 398.2
%%
%%
%%
%%

\subsection{Noiseless Instances}
In Tables \ref{table:noiseless3} and 
\ref{table:noiseless4} we present the results with noiseless
data with  target rank $r=2$ and $r=4$, respectively.
We have left the density of the data relatively high. Note that we
set the density in MATLAB at $.35$ and $.4$ and obtained
$.30$ and $.36$, respectively,
as the average of the actual densities for the $5$ instances.
We see that we get efficient \emph{high} accuracy recovery in \emph{every} 
instance. The accuracy is significantly higher than what one can expect
from an \SDP interior point solver. The cputime is almost entirely spent on
a QR factorization that is used as a heuristic for finding a correct subset of
well-conditioned linear constraints. However, we do not need any
refinement steps as the high density guarantees that we have enough
cliques to cover the nodes in the corresponding graph.
Table \ref{table:sparsetable3} illustrates a different approach for
lower density problems. We remove the rows and columns of the original
data matrix corresponding to zero diagonal elements of the final exposing
matrix. We include the percentage
of the number of elements of the original data matrix that are recovered
and the corresponding percentage residual. Since the accuracy is quite
high for this recovered submatrix, it can then be used as data with another
heuristic, such as the nuclear norm heuristic, to recover the complete matrix.

Note that the largest problems have $50,000,000$ data entries in $Z$ with
approximately $5,000,000$ unkown values that were recovered successfully. 
The correct rank was recovered in every instance.

%In Table \ref{table:noiseless4} we increase the rank and the density and
%still obtain excellent results.
%Note that we consider lower density and higher ranks in the noisy cases in the
%tables that follow below.
	\begin{table*}[!h]
	\centering
	\caption{\underline{noiseless}: $r=2$;
		$m \times n$ size; density $p$}
		\label{table:noiseless3}
		%\begin{tabular}{|c|cccccccc|}
		%\hline
		\begin{tabular}{|ccc||c|c|c|} \hline
\multicolumn{3}{|c||}{Specifications} & \multirow{2}{*}{Time (s)} & \multirow{2}{*}{Rank} &\multirow{2}{*}{Residual (\%$Z$)}\cr\cline{1-3}
  $m$ &   $n$ & mean($p$) &        &         &           \cr\hline
  700 &  2000 &  0.30 &   9.00 &     2.0 & 4.4605e-14 \cr\hline
 1000 &  5000 &  0.30 &  28.76 &     2.0 & 3.0297e-13 \cr\hline
 1400 &  9000 &  0.30 &  77.59 &     2.0 & 7.8674e-14 \cr\hline
 1900 & 14000 &  0.30 & 192.14 &     2.0 & 6.7292e-14 \cr\hline
 2500 & 20000 &  0.30 & 727.99 &     2.0 & 4.2753e-10 \cr\hline
\end{tabular}

		%\hline
		%\end{tabular}
\end{table*}
	\begin{table*}[!h]
	\centering
	\caption{\underline{noiseless}: $r=4$;
		$m \times n$ size; density $p$.}
		\label{table:noiseless4}
		%\begin{tabular}{|c|cccccccc|}
		%\hline
		\begin{tabular}{|ccc||c|c|c|} \hline
\multicolumn{3}{|c||}{Specifications} & \multirow{2}{*}{Time (s)} & \multirow{2}{*}{Rank} &\multirow{2}{*}{Residual (\%$Z$)}\cr\cline{1-3}
  $m$ &   $n$ & mean($p$) &        &         &           \cr\hline
  700 &  2000 &  0.36 &  12.80 &     4.0 & 1.5217e-12 \cr\hline
 1000 &  5000 &  0.36 &  49.66 &     4.0 & 1.0910e-12 \cr\hline
 1400 &  9000 &  0.36 & 131.53 &     4.0 & 6.0304e-13 \cr\hline
 1900 & 14000 &  0.36 & 291.22 &     4.0 & 3.4847e-11 \cr\hline
 2500 & 20000 &  0.36 & 798.70 &     4.0 & 7.2256e-08 \cr\hline
\end{tabular}

		%\hline
		%\end{tabular}
\end{table*}

	\begin{table*}[!h]
	\centering
	\caption{\underline{sparse data; noiseless}: $r=3$;
		$m \times n$ size; density $p$}
		\label{table:sparsetable3}
		%\begin{tabular}{|c|cccccccc|}
		%\hline
		\begin{tabular}{|ccc||c|c|c|c|c|} \hline
\multicolumn{3}{|c||}{Specifications} & \multirow{2}{*}{Recover (\%$Z$)} & \multirow{2}{*}{Time (s)} & \multirow{2}{*}{Rank} &\multirow{2}{*}{Residual (\%$Z$)}\cr\cline{1-3}
  $m$ &   $n$ &   $p$ &         &          &         &         \cr\hline
  700 &  1000 &  0.36 &  100.00 &    5.21  &    3.00 & 4.32e-11 \cr\hline
  700 &  1000 &  0.33 &  100.00 &    5.13  &    3.00 & 5.69e-11 \cr\hline
  700 &  1000 &  0.30 &  100.00 &    4.78  &    3.00 & 7.04e-11 \cr\hline
  700 &  1000 &  0.26 &   99.69 &    4.79  &    3.00 & 3.11e-10 \cr\hline
  700 &  1000 &  0.22 &   97.77 &    4.36  &    3.00 & 7.66e-05 \cr\hline
	\hline
 1100 &  8000 &  0.36 &  100.00 &  325.58  &    3.00 & 6.53e-10 \cr\hline
 1100 &  8000 &  0.33 &  100.00 &  321.58  &    3.00 & 3.72e-11 \cr\hline
 1100 &  8000 &  0.30 &  100.00 &  316.28  &    3.00 & 1.92e-10 \cr\hline
 1100 &  8000 &  0.26 &  100.00 &  313.04  &    3.00 & 5.60e-10 \cr\hline
 1100 &  8000 &  0.22 &  100.00 &  307.48  &    3.00 & 9.24e-10 \cr\hline
\end{tabular}

		%\hline
		%\end{tabular}
\end{table*}

\subsection{Noisy Instances}
The first noisy cases follow in Tables
\ref{table:table1} and \ref{table:noisy2}.
%Table \ref{table:noisy3}, page~\pageref{table:noisy3}.
As above for the noiseless case we consider problems with relatively
high density to ensure that we can find enough cliques.
There were some instances where the \emph{eigs} command failed in MATLAB. 
The algorithm avoids these cases by rounding \emph{tiny} numbers to zero and 
finding fewer eigenvalues.

In Table \ref{table:table1}  we consider first increasing noise and then 
increasing size. In Table \ref{table:noisy2} we allow for larger size
and have decreasing density.

	\begin{table*}[!h]
	\centering
	\caption{\underline{noisy}: $r=3$;
	$m \times n$ size; density $p$}
		\label{table:table1}
		%\begin{tabular}{|c|cccccccc|}
		%\hline
		\begin{tabular}{|cccc||cc|cc|cc|} \hline
\multicolumn{4}{|c||}{Specifications} & \multicolumn{2}{|c|}{Time (s)} & \multicolumn{2}{|c|}{Rank} &\multicolumn{2}{|c|}{Residual (\%$Z$)} \cr\hline
  $m$ &   $n$ & \% noise &   $p$ &initial &  refine & initial &  refine & initial &  refine \cr\hline
  700 &  1000 &  0.00 &  0.36 &   4.50 &    6.35 &    3.00 &    3.00 & 4.14e-14 & 3.99e-14 \cr\hline
  700 &  1000 &  1.00 &  0.36 &   4.22 &    9.00 &    3.00 &    3.00 & 2.52e-02 & 2.52e-02 \cr\hline
  700 &  1000 &  2.00 &  0.36 &   4.27 &    9.13 &    2.60 &    2.60 & 4.05e-01 & 4.04e-01 \cr\hline
  700 &  1000 &  3.00 &  0.36 &   4.15 &    9.43 &    2.20 &    2.20 & 4.31e-01 & 4.30e-01 \cr\hline
  700 &  1000 &  4.00 &  0.36 &   4.30 &   12.34 &    1.60 &    1.60 & 9.64e-01 & 9.61e-01 \cr\hline
	\hline
  700 &  1000 &  1.00 &  0.36 &   4.23 &    8.88 &    3.00 &    3.00 & 2.52e-02 & 2.52e-02 \cr\hline
  800 &  2000 &  1.00 &  0.36 &  11.79 &   20.60 &    3.00 &    3.00 & 1.91e-02 & 1.91e-02 \cr\hline
  900 &  4000 &  1.00 &  0.36 &  43.27 &   65.41 &    3.00 &    3.00 & 1.86e-02 & 1.85e-02 \cr\hline
 1000 &  8000 &  1.00 &  0.36 & 156.81 &  204.76 &    3.00 &    3.00 & 1.46e-02 & 1.46e-02 \cr\hline
 1100 & 16000 &  1.00 &  0.36 & 528.60 &  673.97 &    3.00 &    3.00 & 1.51e-02 & 1.51e-02 \cr\hline
\end{tabular}

		%\hline
		%\end{tabular}
\end{table*}
	\begin{table*}[!h]
	\centering
	\caption{\underline{noisy}: $r=3$;
		$m \times n$ size; density $p$}
		\label{table:noisy2}
		%\begin{tabular}{|c|cccccccc|}
		%\hline
		\begin{tabular}{|cccc||cc|cc|cc|} \hline
\multicolumn{4}{|c||}{Specifications} & \multicolumn{2}{|c|}{Time (s)} & \multicolumn{2}{|c|}{Rank} &\multicolumn{2}{|c|}{Residual (\%$Z$)} \cr\hline
  $m$ &   $n$ & \% noise &   $p$ &initial &   total & initial &  refine &   initial &    refine \cr\hline
  700 &  1000 &  0.00 &  0.40 &   2.22 &    1.82 &    2.40 &    2.40 & 3.961e-14 & 3.961e-14 \cr\hline
  700 &  1000 &  0.01 &  0.40 &   4.16 &    8.79 &    3.20 &    3.20 & 9.242e-01 & 9.360e-01 \cr\hline
  700 &  1000 &  0.15 &  0.40 &   3.64 &    6.32 &    2.40 &    2.40 & 9.416e-01 & 9.517e-01 \cr\hline
  700 &  1000 &  0.30 &  0.40 &   3.46 &    7.09 &    8.40 &    8.40 & 9.862e-01 & 9.862e-01 \cr\hline
  700 &  1000 &  0.45 &  0.40 &   3.45 &    4.26 &    3.80 &    3.80 & 9.539e-01 & 9.539e-01 \cr\hline
\hline
 1500 &  2000 & 10.00 &  0.40 &  14.07 &   19.13 &    2.40 &    2.40 & 9.281e-01 & 9.360e-01 \cr\hline
 1600 &  2100 & 10.00 &  0.35 &  13.85 &   18.03 &    2.40 &    2.40 & 9.535e-01 & 9.535e-01 \cr\hline
 1700 &  2200 & 10.00 &  0.30 &  10.48 &   30.81 &   11.00 &   11.00 & 8.000e-01 & 8.000e-01 \cr\hline
 1800 &  2300 & 10.00 &  0.25 &   4.22 &   15.22 &    4.60 &    4.60 & 4.000e-01 & 4.000e-01 \cr\hline
% 1900 &  2400 & 10.00 &  0.20 &   0.00 &    0.00 &    0.00 &    0.00 & 0.000e+00 & 0.000e+00 \cr\hline
\hline
 1900 &  2500 & 10.00 &  0.40 &  21.39 &   29.03 &    2.20 &    2.20 & 9.506e-01 & 9.546e-01 \cr\hline
 2000 &  2600 & 10.00 &  0.35 &  18.58 &   50.70 &   10.20 &   10.20 & 9.894e-01 & 9.894e-01 \cr\hline
 2100 &  2700 & 10.00 &  0.30 &  22.75 &   40.97 &    6.40 &    6.40 & 9.759e-01 & 9.759e-01 \cr\hline
 2200 &  2800 & 10.00 &  0.25 &   6.61 &   26.14 &    5.20 &    5.20 & 4.000e-01 & 4.000e-01 \cr\hline
% 2300 &  2900 & 10.00 &  0.20 &   0.00 &    0.00 &    0.00 &    0.00 & 0.000e+00 & 0.000e+00 \cr\hline
\end{tabular}

		%\hline
		%\end{tabular}
\end{table*}

% \begin{table*}[!h]
% 	\centering
% 	\caption{\underline{noisy/mean($5$) instances}: $r=3$;
% 		$m \times n$ size $\uparrow$; 
% 	mean(densities) $p \downarrow$; noise $\uparrow$.}
% 		\label{table:noisy3}
% 		%\begin{tabular}{|c|cccccccc|}
% 		%\hline
% 		\input{tableNsyLrgrank3}
% 		%\hline
% 		%\end{tabular}
% \end{table*}

\subsection{Sparse Noisy Instances}
We now consider our last case - noisy instances but with lower density, see
Tables~\ref{table:sparsenoisytable1},~\ref{table:sparsenoisytable2}.
 
In this case there may not be enough cliques to cover the entire graph for the
problem instance. Moreover, the covered nodes are not covered
\emph{well} and so we do not expect good recovery from this poor data in
the presence of noise.
We report on the percentage of the matrix $Z$ that has been recovered.  
If we wanted to recover more then we
could solve a larger \SDP problem as done in \cite{ChDrWo:14}.

%\begin{adjustbox}{max width=1.2\textwidth}
\begin{table*}[!h]
	\centering
	\caption{\underline{sparse noisy}: $r=2$;
		$m \times n$ size; density $p$}
		\label{table:sparsenoisytable1}
		%\begin{tabular}{|c|cccccccc|}
		%\hline
		\begin{tabular}{|cccc||c|cc|cc|cc|} \hline
\multicolumn{4}{|c||}{Specifications} & \multirow{2}{*}{Recover (\%$Z$)} & \multicolumn{2}{|c|}{Time (s)} & \multicolumn{2}{|c|}{Rank} &\multicolumn{2}{|c|}{Residual (\%$Z$)}\cr\cline{1-4}\cline{6-11}
  $m$ &   $n$ & \% noise & mean($p$) &        & initial &  refine & initial &  refine & initial &  refine \cr\hline
  700 &  1000 & 0.0e+00 &  0.18 &  99.89 &    3.18 &    7.91 &    2.00 &    2.00 & 2.32e-12 & 1.29e-12 \cr\hline
  700 &  1000 & 1.0e-01 &  0.18 &  99.89 &    3.10 &   13.87 &    2.40 &    2.00 & 1.53e+01 & 1.11e+00 \cr\hline
  700 &  1000 & 2.0e-01 &  0.18 &  99.89 &    2.98 &   14.15 &    2.40 &    2.00 & 2.79e+01 & 2.22e+00 \cr\hline
  700 &  1000 & 3.0e-01 &  0.18 &  99.89 &    3.00 &   14.36 &    2.40 &    2.00 & 3.83e+01 & 3.12e+00 \cr\hline
  700 &  1000 & 4.0e-01 &  0.18 &  99.89 &    2.98 &   15.53 &    2.40 &    2.00 & 4.76e+01 & 4.23e+00 \cr\hline
\hline
  700 &  1000 & 1.0e-03 &  0.33 & 100.00 &    3.64 &   13.08 &    2.80 &    2.00 & 4.11e-03 & 4.11e-03 \cr\hline
  700 &  1000 & 1.0e-03 &  0.30 & 100.00 &    3.27 &   11.71 &    2.50 &    2.00 & 4.49e-03 & 4.49e-03 \cr\hline
  700 &  1000 & 1.0e-03 &  0.26 & 100.00 &    3.78 &   15.18 &    2.20 &    2.00 & 3.52e-03 & 3.52e-03 \cr\hline
  700 &  1000 & 1.0e-03 &  0.22 & 100.00 &    3.88 &   16.54 &    2.00 &    2.00 & 6.62e-03 & 6.62e-03 \cr\hline
  700 &  1000 & 1.0e-03 &  0.18 &  99.89 &    3.66 &   10.34 &    2.40 &    2.00 & 1.35e-01 & 1.35e-01 \cr\hline
\hline
  900 &  2000 & 1.0e-04 &  0.18 & 100.00 &    8.74 &   25.18 &    2.60 &    2.00 & 1.09e-02 & 1.09e-02 \cr\hline
  900 &  2000 & 1.0e-04 &  0.16 & 100.00 &    8.39 &   26.91 &    2.00 &    2.00 & 9.66e-04 & 9.66e-04 \cr\hline
  900 &  2000 & 1.0e-04 &  0.14 &  99.96 &    7.94 &   25.85 &    2.60 &    2.00 & 2.84e-02 & 2.84e-02 \cr\hline
  900 &  2000 & 1.0e-04 &  0.11 &  98.89 &    7.86 &   24.00 &    2.40 &    2.00 & 1.17e-01 & 1.16e-01 \cr\hline
  900 &  2000 & 1.0e-04 &  0.09 &  92.26 &    6.48 &   26.28 &    2.80 &    2.00 & 8.51e-01 & 3.47e-01 \cr\hline
\end{tabular}

		%\hline
		%\end{tabular}
\end{table*}
%\end{adjustbox}
%\begin{adjustbox}{max width=1.2\textwidth}
\begin{table*}[!h]
	\centering
	\caption{\underline{sparse noisy}: $r=3$;
		$m \times n$ size; density $p$}
		\label{table:sparsenoisytable2}
		%\begin{tabular}{|c|cccccccc|}
		%\hline
		\begin{tabular}{|cccc||c|cc|cc|cc|} \hline
\multicolumn{4}{|c||}{Specifications} & \multirow{2}{*}{Recover (\%$Z$)} & \multicolumn{2}{|c|}{Time (s)} & \multicolumn{2}{|c|}{Rank} &\multicolumn{2}{|c|}{Residual (\%$Z$)}\cr\cline{1-4}\cline{6-11}
  $m$ &   $n$ & \% noise & mean($p$) &        & initial &  refine & initial &  refine & initial &  refine \cr\hline
  700 &  1000 & 0.0e+00 &  0.18 &  86.90 &    1.87 &    4.85 &    3.20 &    3.00 & 3.76e-07 & 3.66e-07 \cr\hline
  700 &  1000 & 1.0e-01 &  0.18 &  86.75 &    2.81 &   31.12 &    4.25 &    3.75 & 2.07e+03 & 9.78e+01 \cr\hline
  700 &  1000 & 2.0e-01 &  0.18 &  86.75 &    2.77 &   31.60 &    4.25 &    3.50 & 2.88e+03 & 2.70e+02 \cr\hline
  700 &  1000 & 3.0e-01 &  0.18 &  86.75 &    2.69 &   32.35 &    4.00 &    3.25 & 2.96e+03 & 1.59e+02 \cr\hline
  700 &  1000 & 4.0e-01 &  0.18 &  86.90 &    2.61 &   35.53 &    4.00 &    3.60 & 6.09e+04 & 4.10e+02 \cr\hline
  700 &  1000 & 1.0e-03 &  0.33 & 100.00 &    4.93 &   13.04 &    3.00 &    3.00 & 2.43e-03 & 2.43e-03 \cr\hline
  700 &  1000 & 1.0e-03 &  0.30 & 100.00 &    4.62 &   14.36 &    3.80 &    3.00 & 1.77e-02 & 1.77e-02 \cr\hline
  700 &  1000 & 1.0e-03 &  0.26 &  99.69 &    4.19 &   16.04 &    3.00 &    3.00 & 6.94e-02 & 6.94e-02 \cr\hline
  700 &  1000 & 1.0e-03 &  0.22 &  97.77 &    3.81 &   13.91 &    3.40 &    3.00 & 9.74e-01 & 8.42e-01 \cr\hline
  700 &  1000 & 1.0e-03 &  0.18 &  86.75 &    2.93 &   13.23 &    4.75 &    3.00 & 3.54e+00 & 1.65e+00 \cr\hline
  900 &  2000 & 1.0e-04 &  0.18 &  96.81 &    8.01 &   26.45 &    4.60 &    3.00 & 9.72e-02 & 9.71e-02 \cr\hline
  900 &  2000 & 1.0e-04 &  0.16 &  92.60 &    6.04 &   18.93 &    4.80 &    3.00 & 2.10e+00 & 7.20e-01 \cr\hline
  900 &  2000 & 1.0e-04 &  0.16 &  89.45 &    5.41 &   31.18 &    4.00 &    3.25 & 5.72e+01 & 7.57e-01 \cr\hline
  900 &  2000 & 1.0e-04 &  0.15 &  83.53 &    4.71 &   18.10 &    5.00 &    3.00 & 2.61e-01 & 2.60e-01 \cr\hline
  900 &  2000 & 1.0e-04 &  0.14 &  74.94 &    7.21 &   28.49 &    4.00 &    3.00 & 4.05e+01 & 5.70e+00 \cr\hline
\end{tabular}

		%\hline
		%\end{tabular}
\end{table*}
%\end{adjustbox}

\section{Conclusion}
\label{sect:concl}
In this paper we have shown that we can apply facial reduction through 
the exposing vector approach used
in \cite{ChDrWo:14} in combination with the nuclear norm
heuristic to efficiently find low-rank matrix completions.
This exploits the degenerate structure of the optimal solution set even
though the nuclear norm heuristic problem itself satisfies strict feasibility.

Specifically, whenever enough cliques are available for our graph
description, we are able to find a proper face with a
\emph{significantly reduced 
dimension} that contains the optimal solution set. We then
solve this smaller minimum trace problem by
\emph{flipping} the problem and using a refinement
with a parametric point approach. If we cannot find enough cliques, 
the matrix can still be partially completed.
Having an insufficient number of cliques is indicative of not having
enough initial data to recover the unknown elements.
Throughout we see that the facial reduction both regularizes the problem
and reduces the size and often allows for a solution without any
refinement.

Our \underline{\emph{preliminary}} numerical results are promising as they
efficiently and accurately recover large scale problems. The numerical
tests are ongoing with improvements in the efficiency of exploiting the
block structure of the cliques and with solving the lower dimensional
flipped problems. In addition, there are many theoretical
questions about the complexity of exact recovery guarantees and the
relation to the number and size of the cliques.

\section*{Acknowledgement}
The authors would like to thank Nathan Krislock for his help with parts
of the MATLAB coding.
\addcontentsline{toc}{section}{Acknowledgement}

\bibliographystyle{plain}

\cleardoublepage
\addcontentsline{toc}{section}{Index}
%\label{ind:index}
\printindex

\addcontentsline{toc}{section}{Bibliography}
%\bibliography{.master,.edm,.psd,.bjorBOOK}
\def\cprime{$'$} \def\cprime{$'$} \def\cprime{$'$}
  \def\udot#1{\ifmmode\oalign{$#1$\crcr\hidewidth.\hidewidth
  }\else\oalign{#1\crcr\hidewidth.\hidewidth}\fi} \def\cprime{$'$}
  \def\cprime{$'$} \def\cprime{$'$}

\end{document}